\def\arxiv#1{\href{http://arXiv.org/abs/#1}{arXiv:#1}}
\def\MR#1{\href{http://www.ams.org/mathscinet-getitem?mr=#1}{MR #1}}
\theoremstyle{plain}
\newtheorem{theorem}{Theorem}[section]
\newtheorem{proposition}[theorem]{Proposition}
\newtheorem{lemma}[theorem]{Lemma}
\newtheorem{problem}[theorem]{Problem}
\newtheorem{conjecture}[theorem]{Conjecture}
\newtheorem*{SCR_three}{Theorem A}
\newtheorem*{CR_four}{Theorem B}
\theoremstyle{definition}
\newtheorem{example}[theorem]{Example}
\newcommand{\Z}{\mathbb{Z}}
\newcommand{\Q}{\mathbb{Q}}
\newcommand{\C}{\mathbb{C}}
\newcommand{\CP}{\mathbb{C}P}
\newcommand{\uC}{\underline{\mathbb{C}}}
\def\e{\epsilon}
\def\ve{\varepsilon}
\begin{document}
\title{Classification of Bott manifolds up to dimension eight}

\author[S.Choi]{Suyoung Choi}
\address{Department of Mathematics, Ajou University, San 5, Woncheon-dong, Yeongtong-gu, Suwon 443-749, Korea}
\email{schoi@ajou.ac.kr}

\date{\today}
\maketitle

\begin{abstract}
We show that three- and four-stage Bott manifolds are classified up to diffeomorphism by their integral cohomology rings. In addition, any cohomology ring isomorphism between two three-stage Bott manifolds can be realized by a diffeomorphism between the Bott manifolds.
\end{abstract}

\section{Introduction}
A \emph{Bott tower} of height $n$ is a sequence of projective bundles
\begin{equation} \label{BTower}
    B_\bullet \colon B_n \stackrel{\pi_n}\longrightarrow B_{n-1} \stackrel{\pi_{n-1}}\longrightarrow \cdots \stackrel{\pi_2}\longrightarrow B_1 \stackrel{\pi_1}\longrightarrow B_0 = \{\text{a point}\},
\end{equation}
where, for $i=1, \ldots, n$, $\xi_i$ is a complex line bundle and $\uC$ is a complex line bundle over $B_{i-1}$, and $\pi_i \colon B_i = P(\uC \oplus \xi_i) \to B_{i-1}$ is a projective bundle over $B_{i-1}$. We call $B_n$ an $n$-stage Bott manifold, and $B_\bullet$ a Bott tower structure of $B_n$. Note that an $n$-stage Bott manifold is of real dimension $2n$.  A one-stage Bott manifold is the complex projective space $\CP^1$ of complex dimension one.  A two-stage Bott manifold is known as a \emph{Hirzebruch surface}. Hirzebruch \cite{hirzebruch:1951} has shown that the topological type of a Hirzebruch surface $\Sigma_a = P(\uC\oplus \gamma^{\otimes a})$ is completely determined by the parity of $a$, where $\gamma$ is the tautological line bundle over $\CP^1$; i.e., $\Sigma_a$ is homeomorphic to $\Sigma_b$ if and only if $a \equiv b (\text{mod }2)$. In addition, one can easily see that $H^\ast(\Sigma_0)$ and $H^\ast(\Sigma_1)$ are not isomorphic as graded rings. Later, it is shown that this classification also holds in the smooth category (see \cite{ma-pa:2008}), and stimulates the following conjecture (see \cite{CMS}).

\begin{conjecture}[\textbf{Cohomological rigidity conjecture for Bott manifolds}] \label{conj:CRC}
    Let $B_n$ and $B_n'$ be $n$-stage Bott manifolds. Then, $B_n$ is diffeomorphic to $B_n'$ if and only if $H^\ast(B_n)$ is isomorphic to $H^\ast(B_n')$ as graded rings.
\end{conjecture}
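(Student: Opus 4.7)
The plan is to proceed by induction on the height $n$, using the cases $n=1$ (where $B_1 = \CP^1$) and $n=2$ (Hirzebruch's theorem, quoted above) as the base. For the inductive step, I would fix the canonical presentation of $H^*(B_n)$ obtained by iterating the projective-bundle formula, namely
\[
    H^*(B_n) \cong \Z[x_1,\dots,x_n] \bigm/ \bigl( x_i^2 + x_i \textstyle\sum_{j<i} a_{ij} x_j : i=1,\dots,n \bigr),
\]
where $A = (a_{ij})$ is a strictly upper-triangular integer matrix determined by the first Chern classes $c_1(\xi_i)\in H^2(B_{i-1})$. Two Bott towers with the same matrix $A$ (up to the obvious involution $\xi_i \mapsto \xi_i^{-1}$ that swaps the summands of $\uC\oplus\xi_i$ and preserves $B_i$) are isomorphic as towers, so the task becomes: given an abstract graded ring isomorphism $\varphi\colon H^*(B_n') \xrightarrow{\sim} H^*(B_n)$, show that $A$ and $A'$ differ by a sequence of operations each of which preserves the diffeomorphism class of the associated manifold.

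The central step is to recover, purely ring-theoretically, the subring $H^*(B_{n-1}) \hookrightarrow H^*(B_n)$ corresponding to the base of the tower. Once this is done, $\varphi$ restricts to an isomorphism of base cohomologies, the inductive hypothesis supplies a diffeomorphism $B_{n-1}' \cong B_{n-1}$, and the remaining datum is the classifying line bundle $\xi_n$, represented by a class in $H^2(B_{n-1})$; matching this class up to the involution $\xi_n \mapsto \xi_n^{-1}$ then yields the desired diffeomorphism $B_n' \cong B_n$. The stronger realizability claim (any cohomology isomorphism is induced by a diffeomorphism) would additionally require showing that the ambiguities in the base identification can be lifted to self-diffeomorphisms of $B_{n-1}$, i.e.\ that every automorphism of $H^*(B_{n-1})$ arising in the argument is itself geometric — which is exactly the inductive hypothesis applied to the identity.

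The main obstacle is the intrinsic identification of the base subring: a graded ring isomorphism does not a priori respect the filtration $H^*(B_1)\subset \cdots \subset H^*(B_n)$ induced by the tower, so this filtration must be reconstructed from ring-theoretic invariants. A natural attempt is to single out the distinguished degree-two classes $y\in H^2(B_n)$ whose squares $y^2$ decompose into a prescribed form dictated by the relation $x_n^2 = -x_n\sum_{j<n} a_{nj}x_j$, and then to show that, modulo sign changes on the $x_i$ and certain elementary row-column operations on $A$ corresponding to re-choices of the tower structure on a fixed manifold, every such admissible $y$ is equivalent to $x_n$. I expect the combinatorial classification of these ``admissible'' normal forms of $A$ to grow rapidly in complexity with $n$, which should account for the restriction to heights $n \le 4$ in the theorems advertised in the abstract.
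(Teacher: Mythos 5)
You are attempting to prove a statement that the paper itself does not prove and that is, in fact, open: Conjecture~\ref{conj:CRC} is verified in the paper only for $n\le 4$ (Theorems~\ref{thm:SCR_three} and \ref{thm:CR_four}), and the general case remains a conjecture. So your proposal cannot be measured against a proof in the paper; it has to stand on its own, and it does not — it is an induction scheme whose inductive step is left unexecuted at exactly the point where the real difficulty lies, as you yourself concede (``the main obstacle is the intrinsic identification of the base subring'').

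Beyond that concession, there are two concrete reasons the induction as you set it up does not close. First, a graded ring isomorphism $\varphi$ need not respect the filtration $H^*(B_1)\subset\cdots\subset H^*(B_n)$, i.e.\ need not be $(n-1)$-stable in the paper's terminology, and the failure is not repaired by sign changes on the $x_i$ or the involution $\xi_n\mapsto\xi_n^{-1}$: in the paper's proof of Theorem~\ref{thm:CR_four}, CASES 2 and 3 are precisely situations where $\varphi(x_3)$ involves $y_4$, and the remedy is to \emph{replace the Bott tower structure of the target manifold} using a bundle isomorphism (Lemma~\ref{lemma:bundle_change}, resting on a nontrivial splitting criterion for rank-two bundles from \cite{ishida:pre}) together with an interchange of stages (as in Lemma~\ref{lemma:making well-ordered}) — operations your list of ``elementary row-column operations on $A$'' does not obviously contain and which you do not justify. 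Second, and more structurally: even when $\varphi$ is $(n-1)$-stable, transporting the line bundle $\xi_n$ across the base identification requires a diffeomorphism $B_{n-1}'\to B_{n-1}$ that induces the \emph{restricted} isomorphism $h_{n-1}$ (this is the hypothesis of Ishida's Theorem~\ref{thm;Ishida}), not merely \emph{some} diffeomorphism. That is the strong conjecture (Conjecture~\ref{conj:SCRC}) for height $n-1$, not the conjecture you are inducting on; your remark that this is ``exactly the inductive hypothesis applied to the identity'' conflates the two. This distinction is exactly the architecture of the paper — strong rigidity for $n=3$ (Theorem~\ref{thm:SCR_three}) is what yields ordinary rigidity for $n=4$ (Theorem~\ref{thm:CR_four}) — and it is also exactly why the paper stops there: strong rigidity for $n=4$ is open (the unresolved automorphisms $\varphi_k$ in the paper's final Problem), so your induction stalls at $n=5$ for the same reason the field does, not because of combinatorial complexity of normal forms of $A$.
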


More strongly, we conjecture the following:
\begin{conjecture}[\textbf{Strong cohomological rigidity conjecture for Bott manifolds}] \label{conj:SCRC}
    For any cohomology ring isomorphism $\varphi$ between two Bott manifolds, there is a diffeomorphism which induces $\varphi$.
\end{conjecture}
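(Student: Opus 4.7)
The plan is to prove the conjecture by induction on the Bott tower height $n$, with the Hirzebruch surface classification serving as the base case $n\le 2$ and exploiting the projective bundle structure at each stage. I encode an $n$-stage Bott manifold by its Bott matrix $A=(\alpha_{ij})$ determined by $c_1(\xi_i)=\sum_{j<i}\alpha_{ij}x_j$, where $x_i$ is the first Chern class of the tautological line bundle of the $i$-th projective bundle, so that
\[
  H^*(B_n;\Z)\cong\Z[x_1,\ldots,x_n]\big/\bigl(x_i^{\,2}-x_i\textstyle\sum_{j<i}\alpha_{ij}x_j\bigr).
\]

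My first move is to catalogue a family of elementary graded-ring automorphisms of $H^*(B_n)$ and to realize each one by an explicit diffeomorphism. Three families are natural: (i) stage permutations of commuting sub-bundles, realized by reordering factors; (ii) fiberwise involutions $x_i\mapsto -x_i+\sum_{j<i}\alpha_{ij}x_j$ coming from swapping the two section classes of $P(\uC\oplus\xi_i)$; and (iii) shears $x_i\mapsto x_i+\sum_{j<i}\beta_{ij}x_j$, realized via the bundle isomorphism $P(\uC\oplus\xi_i)\cong P(L\oplus L\otimes\xi_i)$ obtained by tensoring with a pullback line bundle $L$ from $B_{i-1}$. The goal is to show that these elementary automorphisms generate the full graded-ring automorphism group of $H^*(B_n)$, equivalently that they act transitively on the set of $n$-tuples of degree-two generators satisfying lower-triangular Bott-type squaring relations.

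Given such a transitivity statement, the rest is a clean inductive argument. For a cohomology isomorphism $\varphi:H^*(B_n')\to H^*(B_n)$, the image tuple $(\varphi(x_1'),\ldots,\varphi(x_n'))$ is a Bott collection in $H^*(B_n)$. By the transitivity step I compose $\varphi$ with a diffeomorphism-induced automorphism so that canonical generators go to canonical generators; the Bott matrices $A'$ and $A$ then agree, and $\varphi$ restricts to an isomorphism $H^*(B_{n-1}')\to H^*(B_{n-1})$. By the inductive hypothesis, this restriction is induced by a diffeomorphism $f\colon B_{n-1}'\to B_{n-1}$; since $\xi_n'$ and $f^*\xi_n$ have matching Chern classes, a standard bundle-lifting argument produces a diffeomorphism $B_n'\to B_n$ inducing the original $\varphi$.

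The hard part will be proving transitivity of the elementary automorphisms on Bott collections. Algebraically, one must classify all $n$-tuples of degree-two integral classes generating $H^*(B_n;\Z)$ and satisfying some triangular squaring relations, and then exhibit each such tuple as the image of the canonical one under a composition of the three elementary types. This is sharply sensitive to the integral structure: the Bott matrix is not itself a diffeomorphism invariant, so distinct tower structures on the same $B_n$ yield seemingly unrelated matrices, and the algebraic automorphism group harbours transformations that silently identify them. My approach would be to first reduce modulo $2$ and use Hirzebruch-type parity obstructions to pin down the matrix up to the mod-$2$ action of elementary operations, then lift integrally via Poincar\'e duality applied to the top class $x_1\cdots x_n$ and the multiplicative relations it imposes on the coefficients $c_{ij}$ of $\varphi(x_i')=\sum c_{ij}x_j$. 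All current arguments in the literature break down for this transitivity above small $n$; a uniform proof would likely demand a genuinely new structural input, perhaps from the toric-variety description of Bott manifolds, where the problem translates into controlling $GL_n(\Z)$-type equivalence of integer fans.
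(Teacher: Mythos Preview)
The statement you are addressing is Conjecture~\ref{conj:SCRC}, and the paper does \emph{not} prove it in general: it establishes only the case $n\le 3$ (Theorem~\ref{thm:SCR_three}), while for $n=4$ it obtains merely the weaker Conjecture~\ref{conj:CRC} (Theorem~\ref{thm:CR_four}). There is therefore no proof in the paper to compare against; your proposal is an attack on an open problem, and you yourself flag the decisive step---transitivity of the elementary automorphisms on Bott collections---as unproven.

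That gap is not cosmetic. The paper's Example after Theorem~\ref{thm:CR_four} writes down four explicit graded-ring automorphisms $\varphi_1,\dots,\varphi_4$ of $H^\ast(B_4)$ for a Bott tower with $\alpha_4=x_3-\tfrac{1}{2}\alpha_3$, each sending $x_3\mapsto \pm(2x_4-x_3)+(\text{lower terms})$, and then poses their realizability as an open Problem. These are precisely the automorphisms your scheme must factor through stage permutations, fiberwise involutions $x_i\mapsto -x_i+\alpha_i$, and shears $x_i\mapsto x_i+u$ with $u\in H^\ast(B_{i-1})$; but a shear never adds a higher-stage generator to $x_3$, the stage permutation $x_3\leftrightarrow x_4$ is unavailable because $A^3_4=1\ne 0$, and the remaining moves do not obviously manufacture the coefficient $2$ on $x_4$. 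Indeed, in Case~3 of the proof of Theorem~\ref{thm:CR_four} the paper handles exactly this situation by abandoning the given $\varphi$ and building \emph{some} diffeomorphism instead---which is why only cohomological rigidity, not its strong form, is claimed for $n=4$. A smaller issue: your shears are not unconstrained, since $x_i\mapsto x_i+u$ is a ring automorphism only when $u(u-\alpha_i)=0$ (compare Lemma~\ref{lemma:bundle_change}), so even the generating set you propose is thinner than your description suggests.
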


Conjecture~\ref{conj:CRC} is known to be true for $n\leq 3$ (see \cite{ch-ma-su:2010}), and Conjecture~\ref{conj:SCRC} is known to be true for $n\leq 2$ (see \cite{ch-ma:arXiv} or Theorem~\ref{thm:SCR_Q_trivial}). However, they have been open for the higher cases. In this paper, we shall show that Conjecture~\ref{conj:SCRC} is true for three-stage Bott manifolds, and that Conjecture~\ref{conj:CRC} is true for four-stage Bott manifolds; namely, we have the following theorems.
\begin{SCR_three}[Theorem~\ref{thm:SCR_three}]
    For any cohomology ring isomorphism $\varphi$ between two three-stage Bott manifolds, there is a diffeomorphism between them, which induces $\varphi$.
\end{SCR_three}

\begin{CR_four}[Theorem~\ref{thm:CR_four}]
    Let $B_4$ and $B_4'$ be four-stage Bott manifolds. Then, $B_4$ is diffeomorphic to $B_4'$ if and only if $H^\ast(B_4)$ is isomorphic to $H^\ast(B_4')$ as graded rings.
\end{CR_four}


\section{Cohomology rings and square vanishing elements} \label{sec:cohomology ring and Pontrjagin class of Bmfd}
We recall a Bott tower in \eqref{BTower}, and one can express
\[
\text{$B_j=P(\uC\oplus\gamma^{\alpha_j})$ with $\alpha_j\in H^2(B_{j-1})$,}
\]
where $\uC$ denotes the trivial complex line bundle and $\gamma^{\alpha_j}$ denotes the complex line bundle over $B_{j-1}$ with $\alpha_j$ as the first Chern class for $j=1, \ldots, n$.
Using the Borel-Hirzebruch formula \cite{bo-hi:1958} for the cohomology ring of the projective bundle, we have that $H^\ast(B_j)$ is a free module over $H^\ast(B_{j-1})$ via the map $\pi_j^\ast$ on the two generator $1$ and $x_j$ of degree $0$ and $2$, respectively. The ring structure is determined by the single relation
$$
    x_j^2 = \pi_j^\ast(\alpha_j) x_j,
$$ where $x_j$ is the first Chern class of the tautological line bundle over $B_j$.

Using this formula inductively on $j$ and regarding $H^*(B_j)$ as a graded subring of $H^*(B_n)$ through the projections in \eqref{BTower}, namely, setting $x_i := \pi^\ast_n \circ \cdots \circ \pi^\ast_{i+1}(x_i)$, we see that
$$
    H^*(B_n)=\Z[x_1, \ldots, x_n]/\langle x_j^2 = \alpha_jx_j\mid j=1,\dots,n\big \rangle,
$$
where $\alpha_1=0$, and $\alpha_j=\sum_{i=1}^{j-1} A^i_j x_i$ with $A^i_j\in \Z$ for $j=2, \ldots, n$.
Since complex line bundles are classified by their first Chern classes, as is well-known, a Bott tower $B_\bullet$ in \eqref{BTower} is completely determined by the list of integers $A^i_j$ $(1 \leq i < j \leq n)$.
In addition, we note that there is the natural filtration of $H^\ast(B_n)$:
$$
    H^\ast(B_1) \stackrel{\pi_2^\ast}{\hookrightarrow} H^\ast(B_2) \stackrel{\pi_3^\ast}{\hookrightarrow} \cdots \stackrel{\pi_n^\ast}{\hookrightarrow} H^\ast(B_n).
$$


Now, let us consider an element in $H^2(B_n)$ whose square vanishes. Assume that a primitive element $z = ax_j + u$ in $H^2(B_n)$ satisfies $z^2=0$, where $a$ is a non-zero integer and $u$ is a linear combination of $x_i$'s for $i<j$. Then, $z^2 = a^2x_j^2 + 2 a x_j u + u^2 = 0 \in H^\ast(B_n)$; i.e., $2au = - a^2 \alpha_j$ and $u^2 = 0$. This implies that a square vanishing element should be of the form $z = ax_j - \frac{a}{2} \alpha_j$ with $\alpha_j^2=0$. Therefore, a primitive element in $H^2(B_n)$ whose square vanishes is either
$x_j-\frac{1}{2}\alpha_j$ or $2x_j-\alpha_j$
up to sign for some $j$, where $\alpha_j^2=0$ in both cases.
Let $X(B_n)$ be the set of all primitive square vanishing elements of $H^\ast(B_n)$ up to sign. Then, $|X(B_n)|$ is equal to the number of $j$'s satisfying $\alpha_j^2=0$, and, hence, is less than $n$. We say that $B_n$ is \emph{$\Q$-trivial} if its cohomology ring is isomorphic to that of $(\CP^1)^n$ with $\Q$-coefficients as graded rings.

\begin{proposition}\label{proposition:Q-trivial Bott manifold}
$B_n$ is $\Q$-trivial if and only if $\alpha_j^2 = 0$ in $H^*(B_n)$ for all $j=1, \ldots, n$.
\end{proposition}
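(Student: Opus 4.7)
The plan is to handle the two implications separately: the ``if'' direction by an explicit construction, and the ``only if'' direction by exploiting the normal form for square-vanishing classes established just above the statement.

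For the ``if'' direction, suppose $\alpha_j^2=0$ for every $j$. The identity
\[
(2x_j-\alpha_j)^2 = 4x_j^2 - 4\alpha_j x_j + \alpha_j^2 = 0
\]
(using $x_j^2=\alpha_j x_j$) shows there is a well-defined graded $\Q$-algebra homomorphism
\[
\Phi\colon \Q[y_1,\ldots,y_n]/\langle y_1^2,\ldots,y_n^2\rangle\to H^*(B_n;\Q),\qquad y_j\mapsto 2x_j-\alpha_j.
\]
Since $\alpha_j$ is a $\Q$-linear combination of $x_1,\ldots,x_{j-1}$, one has $x_j=\tfrac12(y_j+\alpha_j)$, and induction on $j$ shows every $x_j$ lies in the image of $\Phi$. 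Hence $\Phi$ is a surjection between free $\Q$-modules of equal rank $2^n$, so it is an isomorphism, proving $B_n$ is $\Q$-trivial.

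For the ``only if'' direction, I would revisit the derivation shown above the statement but over $\Q$ instead of $\Z$: if $z=ax_j+u\in H^2(B_n;\Q)$ with $a\neq 0$, $u\in H^2(B_{j-1};\Q)$, and $z^2=0$, then expanding and using that $H^*(B_n;\Q)$ is free over $H^*(B_{j-1};\Q)$ on $\{1,x_j\}$ forces $u=-\tfrac{a}{2}\alpha_j$ and $\alpha_j^2=0$. Consequently, every nonzero square-vanishing class in $H^2(B_n;\Q)$ is a $\Q$-multiple of some $2x_j-\alpha_j$ with $\alpha_j^2=0$, and these normal forms, for different admissible $j$, are $\Q$-linearly independent because their leading $x_j$-coefficients pick out distinct coordinate directions. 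Now $\Q$-triviality supplies an isomorphism $\varphi\colon H^*(B_n;\Q)\to\Q[y_1,\ldots,y_n]/\langle y_j^2\rangle$, whose preimages $\varphi^{-1}(y_1),\ldots,\varphi^{-1}(y_n)$ are $n$ linearly independent square-vanishing classes in degree two. By the previous sentence these span a subspace of dimension at most $|\{j\colon\alpha_j^2=0\}|$, forcing $\alpha_j^2=0$ for all $n$ indices $j$. I expect the only mildly delicate step to be this final dimension count, and in particular the observation that two independent square-vanishing classes cannot share the same leading index; once the normal form is in hand, everything reduces to linear algebra.
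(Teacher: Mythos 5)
Your proof is correct and takes essentially the same route as the paper: the ``if'' direction uses the square-vanishing classes $2x_j-\alpha_j$ (the paper's $x_j-\tfrac{\alpha_j}{2}$ up to scale) as generators, and the ``only if'' direction is the same counting argument via the normal form for square-vanishing degree-two classes, which you rederive over $\Q$ rather than quoting the paper's integral discussion. The only differences are expository: you make explicit the surjection $\Phi$ and the rank-$2^n$ dimension count that the paper leaves implicit.
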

\begin{proof}
If $\alpha_j^2 =0$, then $( x_j - \frac{\alpha_j}{2})^2=0$ in $H^\ast(B_n; \Q)$ because $x_j^2=\alpha_j x_j$.  Since $ x_j - \frac{\alpha_j}{2}$ for $j=1,\dots,n$ generate $H^*(B_n;\Q)$ as a graded ring, this shows that $B_n$ is $\Q$-trivial. Conversely, if $B_n$ is $\Q$-trivial, there are $n$ primitive elements in $H^2(B_n)$ up to sign whose square vanish, which implies the converse by the above discussion.
\end{proof}

It is known that the strong cohomological rigidity holds for the class of $\Q$-trivial Bott manifolds; namely, we have the following theorem.

\begin{theorem}[Choi-Masuda \cite{ch-ma:arXiv}] \label{thm:SCR_Q_trivial}
Any cohomology ring isomorphism between two $\Q$-trivial Bott manifolds is realizable by a diffeomorphism.
\end{theorem}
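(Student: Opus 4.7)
The plan is to promote an abstract cohomology ring isomorphism $\varphi\colon H^\ast(B_n') \to H^\ast(B_n)$ between $\Q$-trivial Bott manifolds into an honest diffeomorphism by combining a structural analysis of $\varphi$ in degree two with explicit geometric operations on the Bott towers. The starting observation is that Proposition~\ref{proposition:Q-trivial Bott manifold} forces $|X(B_n)| = |X(B_n')| = n$, and each primitive square-vanishing class takes the form $y_j = x_j - \frac{1}{2}\alpha_j$ or $y_j = 2x_j - \alpha_j$ depending on the parity of $\alpha_j$. Since the set $X(\cdot)$ is defined intrinsically from the cohomology ring, $\varphi$ must induce a bijection $X(B_n') \to X(B_n)$ up to signs, yielding a permutation $\sigma$ of $\{1,\ldots,n\}$ together with a sign sequence and a matching of the parities $\epsilon_j \in \{1,2\}$ attached to each $\alpha_j$.

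The second step is to isolate the standard diffeomorphism operations on Bott towers and their action on the combinatorial data $(A^i_j)$: the swap $\xi_j \leftrightarrow \uC$ in $P(\uC \oplus \xi_j)$, which negates $\alpha_j$; shifts of the form $\xi_j \mapsto \xi_j \otimes \gamma^{2c}$ with $c \in H^2(B_{j-1})$, which modify $\alpha_j$ by $2c$ and which in the $\Q$-trivial setting realize genuine diffeomorphisms, generalizing the Hirzebruch fact that $\Sigma_a \cong \Sigma_{a+2}$; and, where the bundle structure permits, reorderings of the stages. Using these I would reduce both $B_n$ and $B_n'$ to canonical form, pinning down the matrix $(A^i_j)$ by an explicit recipe depending on the parities $\epsilon_j$ and the intrinsic set $X(B_n)$.

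The third step is induction on $n$, with the base cases $n \leq 2$ supplied by Hirzebruch's classification. For the inductive step, $\varphi(y_n')$ is some $\pm y_{\sigma(n)}$, and after applying the operations of step two one may assume $\sigma(n) = n$ with matching top-stage parities; then $\varphi$ restricts to a cohomology ring isomorphism between the lower $(n-1)$-stage Bott manifolds, to which the inductive hypothesis applies, producing a diffeomorphism on the base that lifts to the total spaces via a fiber identification matching $\alpha_n$ and $\alpha_n'$. The main obstacle will be verifying that the resulting diffeomorphism induces \emph{exactly} $\varphi$ on cohomology and not merely some ring isomorphism in the same combinatorial orbit: this requires careful bookkeeping of how each elementary operation in step two acts on all of $H^\ast$, and judicious use of the freedom to compose the inductively-constructed diffeomorphism with self-diffeomorphisms of the lower stage in order to cancel residual signs and line bundle twists.
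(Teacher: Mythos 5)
First, a point of order: the paper does not prove this statement at all --- it is imported wholesale from Choi--Masuda \cite{ch-ma:arXiv}, and essentially the entire content of that reference is devoted to it. So there is no ``paper's proof'' to compare routes with; your proposal has to stand as an actual proof, and it has genuine gaps at exactly the points where the real work lies. Two of your step-two tools are overstated. Twisting by $\gamma^{2c}$ is \emph{not} a diffeomorphism in the $\Q$-trivial setting for arbitrary $c$: by Lemma~\ref{lemma:bundle_change} (with $\beta=\alpha+2c$, $u=c$) the bundles $P(\uC\oplus\gamma^{\alpha})$ and $P(\uC\oplus\gamma^{\alpha+2c})$ are isomorphic only when $c(c+\alpha)=0$. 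Without that condition the twist can even change the cohomology ring: over $B_2=\CP^1\times\CP^1$ with $\alpha_3=0$ and $c=x_1+x_2$, the twisted manifold has $(2c)^2=8x_1x_2\neq 0$ and is not $\Q$-trivial. (The condition does hold when both $\alpha$ and $\alpha+2c$ are square-zero, but that restriction must be carried through your ``canonical form recipe,'' which is never specified.) Likewise your ``matching of parities'' is not an invariant of the isomorphism: the ring automorphism of $H^\ast(\Sigma_1)$ given by $x_1\mapsto 2x_2-x_1$, $x_2\mapsto x_2$ sends the class of type $x_j-\tfrac{1}{2}\alpha_j$ to one of type $2x_j-\alpha_j$, so ``one may assume matching top-stage parities'' is not available.

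The decisive gap is the reduction ``one may assume $\sigma(n)=n$,'' which is asserted but not deliverable by the listed operations. An adjacent swap of stages requires the linking coefficient $A^{j}_{j+1}$ to vanish (Lemma~\ref{lemma:making well-ordered}), and every admissible twist changes $A^{j}_{j+1}$ by an even integer, so its parity is an obstruction invariant under all of your moves. In a $\Q$-trivial tower with $\alpha_j$ even and $\alpha_{j+1}=A\bigl(x_j-\tfrac{\alpha_j}{2}\bigr)$ with $A$ odd --- such towers exist, e.g.\ $\alpha_3=2x_2$, $\alpha_4=x_3-x_2$ over $\CP^1\times\CP^1$ --- the two stages can never be interchanged by sign changes, admissible twists, or Lemma~\ref{lemma:making well-ordered}. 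This is precisely the $\Q$-trivial shadow of Case 3 in the proof of Theorem~\ref{thm:CR_four}: there the paper can still produce \emph{some} diffeomorphism, but pointedly cannot realize the \emph{given} isomorphism, which is exactly why strong rigidity for four-stage Bott manifolds is left open as the final Problem of the paper. Your closing paragraph defers the same issue (``the main obstacle'') to bookkeeping, but that obstacle \emph{is} the theorem. The sound parts of the sketch --- that $\varphi$ permutes $X(\cdot)$ up to sign, that once $\sigma(n)=n$ the map restricts integrally to the $(n-1)$-stage subrings (a short lattice argument: an integral class lies in the $\Q$-span of $x_1,\dots,x_{n-1}$ iff its $x_n$-coordinate vanishes), and that the lift to the top stage can then be handled by Theorem~\ref{thm;Ishida} plus induction --- form a reasonable skeleton, but the proof is missing at its center.
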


Put $t= |X(B_n)|$. A Bott tower $B_\bullet$ is said to be \emph{well-ordered} if $\alpha_j^2 =0$ for $j=1, \ldots, t$, and $\alpha_j^2 \neq 0$ for $j=t+1, \ldots, n$.
\begin{lemma}\label{lemma:making well-ordered}
Every Bott manifold $B_n$ admits a well-ordered Bott tower structure.
\end{lemma}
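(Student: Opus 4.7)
The plan is to reduce the problem to adjacent swaps. Define the \emph{inversion count} of a Bott tower structure to be the number of pairs $i<j$ with $\alpha_i^2\ne 0$ and $\alpha_j^2=0$. The tower is well-ordered exactly when this count is zero, and any positive inversion count forces an adjacent inversion, i.e., an index $j$ with $\alpha_j^2\ne 0$ and $\alpha_{j+1}^2=0$. Hence it suffices to prove an adjacent swap lemma: given a Bott tower structure on $B_n$ with an adjacent inversion at $(j,j+1)$, there is another Bott tower structure on the same manifold $B_n$ whose inversion count is strictly smaller. A finite iteration then produces a well-ordered structure.

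To establish the swap lemma I would work locally on the sub-tower $B_{j+1}\to B_j\to B_{j-1}$, leaving all other stages untouched. The hypothesis $\alpha_{j+1}^2=0$ produces, by the discussion preceding Proposition~\ref{proposition:Q-trivial Bott manifold}, a primitive square-vanishing element $z\in H^2(B_{j+1})$ of the form $x_{j+1}-\tfrac{1}{2}\alpha_{j+1}$ or $2x_{j+1}-\alpha_{j+1}$. Since $z$ is not pulled back from $B_{j-1}$, I would use it as the first Chern class $\tilde\alpha_j$ of a new intermediate stage $\tilde B_j\to B_{j-1}$, with a complementary degree-two generator of $H^2(B_{j+1})$ over $H^2(\tilde B_j)$ playing the role of the new top class $\tilde x_{j+1}$ giving $B_{j+1}\to\tilde B_j$. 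A direct cohomology ring computation using $x_j^2=\alpha_j x_j$, $x_{j+1}^2=\alpha_{j+1}x_{j+1}$, and $\alpha_{j+1}^2=0$ should verify that $H^*(B_{j+1})$ retains a Bott-tower presentation over $H^*(B_{j-1})$ in the new generators. Because complex line bundles are classified by their first Chern classes, the cohomological data lifts to honest line bundles realizing $\tilde\alpha_j$ and $\tilde\alpha_{j+1}$, giving a new Bott tower structure.

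By construction $\tilde\alpha_j^2=0$, so the square-vanishing index has genuinely moved down by one position and the inversion count strictly decreases. The main obstacle I anticipate is not the bookkeeping but the verification that the new iterated projectivization really has total space diffeomorphic to $B_{j+1}$, and not merely a space with the same cohomology ring. I would handle this by comparing the underlying rank-two bundles over $B_{j-1}$ up to the standard equivalences (replacing $\uC\oplus\xi$ by $\xi^\vee\oplus\uC$, or by $\eta\otimes(\uC\oplus\xi)$ for a line bundle $\eta$) that do not alter the projectivization, thereby identifying the two total spaces smoothly and completing the swap.
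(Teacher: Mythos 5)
Your overall bubble-sort framing (inversion count, reduction to an adjacent swap) matches the iteration implicit in the paper's proof and is fine. The gap is inside the swap itself. The crucial point, which your proposal never establishes, is that the adjacent-inversion hypothesis $\alpha_j^2\neq 0$, $\alpha_{j+1}^2=0$ forces the coefficient $A^j_{j+1}$ of $x_j$ in $\alpha_{j+1}$ to vanish: if $A^j_{j+1}\neq 0$, the classification of square-vanishing elements would give $\alpha_{j+1}=A^j_{j+1}x_j-\frac{A^j_{j+1}}{2}\alpha_j$ together with $\alpha_j^2=0$, contradicting $\alpha_j^2\neq 0$. Only because of this vanishing is $\alpha_{j+1}$ an element of $H^2(B_{j-1})$, so that $\gamma^{\alpha_{j+1}}$ is a line bundle over $B_{j-1}$ and the swap can be performed geometrically as a fiber product: $B_{j+1}=P(\uC\oplus\gamma^{\alpha_{j+1}})\to B_j$ is then canonically the pullback $\pi^\ast B_j$, where $\pi\colon P(\uC\oplus\gamma^{\alpha_{j+1}})\to B_{j-1}$, which exhibits the same total space with the two stages interchanged; no cohomological lifting or comparison of rank-two bundles is needed.

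Your substitute for this step is not well-defined: you propose to use the square-vanishing class $z=x_{j+1}-\frac{1}{2}\alpha_{j+1}$ (or $2x_{j+1}-\alpha_{j+1}$) as the first Chern class $\tilde\alpha_j$ of a line bundle over $B_{j-1}$, but a first Chern class of a line bundle over $B_{j-1}$ must lie in $H^2(B_{j-1})$, and $z$ does not (it involves $x_{j+1}$) --- indeed you yourself note that $z$ ``is not pulled back from $B_{j-1}$,'' which is precisely why it cannot serve as $\tilde\alpha_j$. Moreover, the obstacle you flag at the end --- that a ring presentation of $H^\ast(B_{j+1})$ in new generators need not come from an actual iterated projectivization with diffeomorphic total space --- is real, and your plan to resolve it by ``standard equivalences'' of rank-two bundles over $B_{j-1}$ presupposes that $B_{j+1}\to B_{j-1}$ already factors through a projectivization of a bundle pulled back from $B_{j-1}$, which is exactly what the vanishing of $A^j_{j+1}$ (and only that) delivers. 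Once $A^j_{j+1}=0$ is in hand, the new $\tilde\alpha_j$ is simply $\alpha_{j+1}$ itself, and the cohomological bookkeeping you describe becomes unnecessary.
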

\begin{proof}
    Consider any Bott tower structure of $B_n$ which is not well-ordered. In other words, there exists at least one $j$ such that $\alpha_j^2 \neq 0$ but $\alpha_{j+1}^2=0$.  Remember that $\alpha_{j+1} = \sum_{i=1}^{j-1} A^i_{j+1} x_i + A^j_{j+1} x_j$. If $A^j_{j+1} \neq 0$ and, as assumed, $\alpha_{j+1}^2=0$, then we get $\alpha_{j+1} = A^{j}_{j+1} x_j - \frac{A^{j}_{j+1}}{2} \alpha_j$ with $\alpha_j^2 =0$, which is a contradiction. Hence, $A^{j}_{j+1} =0$. We can interchange the label $j$ and $j+1$, which proves the lemma by following procedure;
    since $A^j_{j+1}=0$, $\gamma^{\alpha_{j+1}}$ can be regarded as a complex bundle over $B_{j-1}$.
Let $\pi : P(\C \oplus \gamma^{\alpha_{j+1}}) \rightarrow B_{j-1}$ be the corresponding projection. Then,
\[
\xymatrix{
    \pi^\ast B_j \ar[d] \ar[drr]^{\tilde{\pi}}  \ar[rr]^{\cong} & & \ar[d] P(\uC \oplus \gamma^{\alpha_{j+1}})=B_{j+1} \\
    P(\uC \oplus \gamma^{\alpha_{j+1}}) \ar[dr]_{\pi} & & \ar[dl]^{\pi_j}
    P(\uC \oplus \gamma^{\alpha_j}) =B_{j} \\
    & B_{j-1}, &
}\] where $\pi^\ast B_j$ is the pullback of $B_j \to B_{j-1}$ by $\pi$. Then, one can see that $\pi^\ast B_j$ is diffeomorphic to $B_{j+1}$, and it gives another Bott tower structure of $B_n$, which is obtained from $B_\bullet$ by interchanging the $j$ and $j+1$ stages.
\end{proof}
From now on, we only consider Bott manifolds with well-ordered Bott tower structure; namely, we assume that any Bott tower which appears this paper is well-ordered.

Let $B_n'$ be another Bott manifold. Suppose that $H^\ast(B_n)$ and $H^\ast(B_n')$ are isomorphic as graded rings. A graded ring isomorphism $\varphi \colon H^\ast(B_n) \to H^\ast(B_n')$ is said to be $k$-stable if there is a graded ring isomorphism $h_k \colon H^\ast (B_k) \to H^\ast(B_k')$ which makes the diagram
$$
\xymatrix{ H^\ast(B_k) \ar@{^{(}->}[rr]^{\pi_n^\ast \circ \cdots \circ \pi_{k+1}^\ast} \ar[d]^{h_k} && H^\ast(B_n) \ar[d]^\varphi \\
H^\ast(B_k') \ar@{^{(}->}[rr]^{\pi_n'^\ast \circ \cdots \circ \pi_{k+1}'^\ast} &&H^\ast(B_n)
}
$$ commute. We note that $\varphi$ should send elements in $X(B_n)$ to elements in $X(B_n')$ up to sign, and $X(B_n)$ forms an basis of $\pi_n^\ast \circ \cdots \circ \pi_{t+1}^\ast(H^2(B_t))$. It implies that $|X(B_n)|= |X(B_n')|$ (say, $t$), and $\varphi$ is $t$-stable.

\begin{theorem}[Ishida \cite{ishida:pre}] \label{thm;Ishida}
    Let $B_n$ and $B_n'$ be two Bott manifolds. If there is an isomorphism $\varphi \colon H^\ast(B_n) \to H^\ast(B_n')$ which is $(n-1)$-stable, and if $h_{n-1}$ is a realizable by a diffeomorphism between $B_{n-1}$ and $B_{n-1}'$, then so is $\varphi$ by a diffeomorphism betwwen $B_n$ and $B_n'$.
\end{theorem}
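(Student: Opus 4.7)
The plan is to lift a diffeomorphism $F\colon B_{n-1}'\to B_{n-1}$ realizing $h_{n-1}$ to a diffeomorphism $B_n'\to B_n$ realizing $\varphi$. First I would extract the top-stage data: since $\varphi$ is $(n-1)$-stable, I may write $\varphi(x_n)=\e x_n' + v$ for an integer $\e$ and $v\in H^2(B_{n-1}')$. Comparing $\varphi(x_n^2)=\varphi(\alpha_n)\varphi(x_n)$ with $(x_n')^2=\alpha_n' x_n'$, using that $H^\ast(B_n')$ is free over $H^\ast(B_{n-1}')$ with basis $\{1,x_n'\}$, forces $\e=\pm 1$ together with
\begin{equation*}
h_{n-1}(\alpha_n)=\e\alpha_n'+2v, \qquad v(v+\e\alpha_n')=0.
\end{equation*}

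Next, I would form the pullback $\widetilde B_n:=F^\ast B_n = P(\uC\oplus\gamma^{h_{n-1}(\alpha_n)})$ over $B_{n-1}'$. The canonical map $\widetilde B_n\to B_n$ covering $F$ is a diffeomorphism by the universal property of pullback, so it remains to construct a diffeomorphism $G\colon B_n'\to\widetilde B_n$ over $B_{n-1}'$ that induces the substitution $x_n\mapsto\e x_n'+v$ on $H^\ast$; then $G$ composed with the pullback diffeomorphism realizes $\varphi$.

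The heart of the argument is this last construction. The two rank-two bundles $\uC\oplus\gamma^{\alpha_n'}$ and $\uC\oplus\gamma^{\e\alpha_n'+2v}$ are not isomorphic up to tensoring by a line bundle except in the degenerate cases $v=0$ or $v=-\e\alpha_n'$, so $G$ must be a non-bundle diffeomorphism. Using that $H^2(B_{n-1}')=\mathrm{Pic}(B_{n-1}')$ for a Bott manifold, I would lift $v$ to a line bundle $\eta$ on $B_{n-1}'$, and then use the identity $v(v+\e\alpha_n')=0$ to exhibit a fiberwise smooth surgery in the $\CP^1$-direction that shifts the classifying Chern class by $2v$. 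The prototype is Hirzebruch's classical diffeomorphism $\Sigma_a\cong\Sigma_{a+2}$, and the general statement is a parametrized Bott-tower version of that construction.

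I expect this last geometric step to be the main obstacle, since it requires promoting the purely algebraic relation $v(v+\e\alpha_n')=0$ into a concrete smooth (but non-holomorphic) diffeomorphism of $\CP^1$-bundles, while simultaneously tracking its effect on the cohomological generator $x_n'$; the earlier stages of the Bott tower are invoked only implicitly, through the existence of the auxiliary line bundle $\eta$ and the well-ordered structure that keeps all intermediate classes geometric.
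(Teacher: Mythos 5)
First, note that the paper does not prove this statement at all: it is quoted from Ishida \cite{ishida:pre}, so the only comparison available inside the paper is Lemma~\ref{lemma:bundle_change}, which encapsulates exactly the mechanism your proof needs.

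Your set-up is correct and matches the standard argument: stability gives $\varphi(x_n)=\e x_n'+v$ with $\e=\pm1$, and comparing $\varphi(x_n^2)=\varphi(\alpha_n)\varphi(x_n)$ in the free $H^\ast(B_{n-1}')$-module with basis $\{1,x_n'\}$ yields $h_{n-1}(\alpha_n)=\e\alpha_n'+2v$ and $v(v+\e\alpha_n')=0$; pulling back $B_n$ along a diffeomorphism realizing $h_{n-1}$ reduces everything to identifying $B_n'=P(\uC\oplus\gamma^{\alpha_n'})$ with $P(\uC\oplus\gamma^{\e\alpha_n'+2v})$ over $B_{n-1}'$. The gap is in how you close this last step. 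Your claim that the two bundles ``are not isomorphic up to tensoring by a line bundle except in the degenerate cases'' is false, and the ``fiberwise smooth surgery'' you invoke in its place is not a construction but a placeholder. In fact the relation $v(v+\e\alpha_n')=0$ is precisely the statement that the total Chern classes match after twisting: taking $\e=1$ for simplicity,
\begin{equation*}
c\bigl((\uC\oplus\gamma^{\alpha_n'})\otimes\gamma^{v}\bigr)=(1+v)(1+\alpha_n'+v)=1+\alpha_n'+2v+v(v+\alpha_n')=1+\alpha_n'+2v ,
\end{equation*}
which equals $c(\uC\oplus\gamma^{\alpha_n'+2v})$. Since Whitney sums of two line bundles over a Bott manifold with equal total Chern classes are isomorphic as complex vector bundles (Ishida's Theorem 3.1, the very result used in the proof of Lemma~\ref{lemma:bundle_change}), one gets a genuine smooth bundle isomorphism $P(\uC\oplus\gamma^{\alpha_n'})\cong P(\uC\oplus\gamma^{\alpha_n'+2v})$ over $B_{n-1}'$; the case $\e=-1$ reduces to this one via the canonical identification $P(\uC\oplus\gamma^{\beta})\cong P(\uC\oplus\gamma^{-\beta})$. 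Tensoring by $\gamma^{v}$ shifts the tautological class by $v$, which is exactly the bookkeeping needed to see that the composite induces $\varphi$. Even your own prototype contradicts your claim: Hirzebruch's diffeomorphism $\Sigma_a\cong\Sigma_{a+2}$ is a smooth bundle isomorphism, because $(\uC\oplus\gamma^{a})\otimes\gamma\cong\uC\oplus\gamma^{a+2}$ smoothly (rank-two bundles over $\CP^1$ are smoothly classified by $c_1$); it is non-holomorphic, but it is not ``non-bundle.'' So the theorem follows by entirely bundle-theoretic means, and the missing ingredient in your write-up is the Chern-class classification of sums of two line bundles over Bott manifolds, not any surgery.
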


\section{Classification of low-stage Bott manifolds}
Note that there is only one one-stage Bott manifold $\CP^1$, and every two-stage Bott manifold is $\Q$-trivial. Hence, by Theorem~\ref{thm:SCR_Q_trivial}, the strong cohomological rigidity holds for one- and two-stage Bott manifolds.

\begin{theorem}\label{thm:SCR_three}
    For any cohomology ring isomorphism $\varphi$ between two three-stage Bott manifolds, there is a diffeomorphism between them, which induces $\varphi$.
\end{theorem}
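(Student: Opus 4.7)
The plan is to stratify the argument by the integer $t := |X(B_3)| = |X(B_3')|$. First, I would observe that for any three-stage Bott manifold $B_3$ one automatically has $\alpha_1^2 = \alpha_2^2 = 0$: by convention $\alpha_1 = 0$, and $\alpha_2 \in H^2(B_1) = H^2(\CP^1)$ forces $\alpha_2^2 \in H^4(\CP^1) = 0$. Hence $t \in \{2,3\}$. By the discussion immediately preceding Theorem~\ref{thm;Ishida}, $\varphi$ is always $t$-stable, so in either case it is $(n-1)=2$-stable.

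If $t = 3$, then by Proposition~\ref{proposition:Q-trivial Bott manifold} both $B_3$ and $B_3'$ are $\Q$-trivial, and Theorem~\ref{thm:SCR_Q_trivial} produces a diffeomorphism inducing $\varphi$ directly.

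If $t = 2$, then the $2$-stability of $\varphi$ supplies a graded ring isomorphism $h_2 \colon H^*(B_2) \to H^*(B_2')$ fitting into the commuting square in the definition of $k$-stability. Since every two-stage Bott manifold is a Hirzebruch surface and thus $\Q$-trivial (again because $\alpha_2^2 \in H^4(\CP^1) = 0$), a second application of Theorem~\ref{thm:SCR_Q_trivial} upgrades $h_2$ to a diffeomorphism $B_2 \to B_2'$. With this in hand, Ishida's Theorem~\ref{thm;Ishida} applied at $n = 3$ concludes that $\varphi$ itself is realized by a diffeomorphism $B_3 \to B_3'$.

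There is essentially no hard step: the theorem is a clean corollary of two external inputs, the strong rigidity of $\Q$-trivial Bott manifolds (Theorem~\ref{thm:SCR_Q_trivial}) and Ishida's lifting from the $(n-1)$-st stage (Theorem~\ref{thm;Ishida}). The only content specific to dimension three is the observation $t \geq n-1$, a purely algebraic consequence of the vanishing of $H^4(\CP^1)$. The one point I would handle with care is ensuring the Bott tower structures on $B_3$ and $B_3'$ are simultaneously well-ordered so that the characterization in Proposition~\ref{proposition:Q-trivial Bott manifold} and Ishida's theorem apply as stated; Lemma~\ref{lemma:making well-ordered} reduces this to a cosmetic matter. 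By contrast, the analogous approach for four-stage Bott manifolds breaks precisely when $\alpha_3^2 \neq 0$, which is what makes Theorem~B a substantially deeper result.
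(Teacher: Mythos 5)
Your proof is correct and follows essentially the same route as the paper: split according to whether the manifolds are $\Q$-trivial (equivalently $t=3$), settle that case by Theorem~\ref{thm:SCR_Q_trivial}, and otherwise use $2$-stability of $\varphi$ together with strong rigidity for two-stage Bott manifolds and Theorem~\ref{thm;Ishida}. You merely spell out details the paper leaves implicit, namely that $\alpha_1^2=\alpha_2^2=0$ automatically, so $t\in\{2,3\}$ and any three-stage Bott tower is in fact already well-ordered.
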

\begin{proof}
    If three-stage Bott manifolds are $\Q$-trivial, then, by Theorem~\ref{thm:SCR_Q_trivial},  $\varphi$ can be realized by diffeomorphism. Otherwise, namely, they are not $\Q$-trivial, then $\varphi$ should be $2$-stable. Since the strong cohomological rigidity holds for two-stage Bott manifolds, by Theorem~\ref{thm;Ishida}, $\varphi$ is realizable.
\end{proof}

Now, we prepare one lemma for proving the cohomological rigidity for four-stage Bott manifolds.
\begin{lemma}\label{lemma:bundle_change}
Let $B_n = P(\uC\oplus \gamma^\alpha)$ and $B'_n = P(\uC\oplus \gamma^\beta)$ be two projective bundles over an $(n-1)$-stage Bott manifold $B_{n-1}$.
If there exists $u \in H^2(B_{n-1})$ such that $\alpha = \beta - 2u$ and $u(u - \beta) = 0$, then $B_n$ is isomorphic to $B'_n$ as bundles.
\end{lemma}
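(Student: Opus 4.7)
The plan is to exhibit a smooth isomorphism $\uC\oplus\gamma^\beta\cong\gamma^u\oplus\gamma^{\beta-u}$ of rank-$2$ complex vector bundles over $B_{n-1}$. Once this is established, projectivizing gives $B'_n=P(\uC\oplus\gamma^\beta)\cong P(\gamma^u\oplus\gamma^{\beta-u})$, and then tensoring by the line bundle $\gamma^{-u}$ (which leaves projectivizations unchanged) yields
\[
P(\gamma^u\oplus\gamma^{\beta-u})=P\bigl((\gamma^u\oplus\gamma^{\beta-u})\otimes\gamma^{-u}\bigr)=P(\uC\oplus\gamma^{\beta-2u})=P(\uC\oplus\gamma^\alpha)=B_n,
\]
giving the claimed bundle isomorphism.

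The hypothesis makes the Chern classes match: one computes
\[
c(\gamma^u\oplus\gamma^{\beta-u})=(1+u)(1+\beta-u)=1+\beta+u(\beta-u)=1+\beta,
\]
where the final step uses $u(\beta-u)=-u(u-\beta)=0$. In particular $c_2(\gamma^u\oplus\gamma^{\beta-u})=u(\beta-u)=0$, so the primary (Euler-class) obstruction for this rank-$2$ bundle to admit a nowhere-vanishing smooth section vanishes.

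To produce the isomorphism, I plan to build a nowhere-vanishing smooth section of $\gamma^u\oplus\gamma^{\beta-u}$; such a section realizes a sub-line-bundle $\uC\hookrightarrow\gamma^u\oplus\gamma^{\beta-u}$, which in the smooth category splits off (via a Hermitian metric and partition of unity) as $\uC\oplus Q$ for some smooth line bundle $Q$, necessarily with $c_1(Q)=\beta$ and hence $Q\cong\gamma^\beta$. Concretely one looks for generic sections $\sigma\in\Gamma(\gamma^u)$ and $\tau\in\Gamma(\gamma^{\beta-u})$ whose codimension-$2$ zero loci $Z_\sigma,Z_\tau$ are disjoint; their cohomological intersection $[Z_\sigma]\cdot[Z_\tau]=u(\beta-u)$ does vanish.

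The hard part is this construction of the nowhere-vanishing section. Beyond $c_2$, secondary obstructions lie in $H^{2k}(B_{n-1};\pi_{2k-1}(S^3))$ for $k\geq3$, and these are not automatically zero on a Bott manifold of high enough dimension. One must close this gap either by exploiting that $\gamma^u\oplus\gamma^{\beta-u}$ is a direct sum of line bundles—so its classifying map factors through $BU(1)\times BU(1)\to BU(2)$ and the higher obstructions become explicit expressions in $u$ and $\beta-u$—or by a direct transversality/Whitney argument on $B_{n-1}$ using the vanishing of the intersection $u(\beta-u)$ to make $Z_\sigma$ and $Z_\tau$ disjoint. This is the step where the Bott tower structure of $B_{n-1}$ and the algebraic hypothesis $u(u-\beta)=0$ genuinely interact.
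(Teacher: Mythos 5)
Your reduction is sound and structurally identical to the paper's: after tensoring by $\gamma^{-u}$ (which indeed leaves the projectivization unchanged), everything comes down to showing that two rank-two Whitney sums of line bundles over $B_{n-1}$ with the same total Chern class are isomorphic as bundles --- in your formulation $\uC\oplus\gamma^\beta\cong\gamma^u\oplus\gamma^{\beta-u}$, where $c(\gamma^u\oplus\gamma^{\beta-u})=1+\beta$ by the hypothesis $u(u-\beta)=0$. The Chern class computation and the tensoring trick are both correct.

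However, the step you yourself flag as ``the hard part'' is a genuine gap, and it sits exactly where the paper invokes an external theorem rather than obstruction theory. For $\dim_{\R}B_{n-1}=2(n-1)>4$, vanishing of the Euler class $c_2$ does not yield a nowhere-vanishing section: the higher obstructions in $H^{2k}(B_{n-1};\pi_{2k-1}(S^3))$ are real, and neither of your two proposed strategies is carried out. In particular, vanishing of the cup product $u(\beta-u)$ is necessary but not sufficient for making the zero loci $Z_\sigma$ and $Z_\tau$ disjoint (there is no Whitney trick available here), and the factorization of the classifying map through $BU(1)\times BU(1)$ does not by itself kill the higher obstructions. The paper closes precisely this gap by citing \cite[Theorem 3.1]{ishida:pre}, which asserts that Whitney sums of complex line bundles over a Bott manifold are isomorphic as bundles whenever their total Chern classes coincide --- a nontrivial result whose proof exploits the special structure of the base. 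So your proposal reproduces the paper's outline but leaves its one substantive input unproven; to complete it you must either quote Ishida's theorem or actually prove the rank-two case of it (say, by induction on the stages of the Bott tower), which is considerably more than the transversality sketch you give.
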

\begin{proof}
    Note that $P(\uC\oplus \gamma^\beta)$ is isomorphic to $P(\gamma^u \oplus \gamma^{\beta+u})$. The total Chern class of $\gamma^{-u} \oplus \gamma^{\beta-u}$ is $(1-u)(1+\beta-u) = 1 + \beta - 2u + u(u-\beta) = 1 + \alpha$. Hence, $\gamma^{-u} \oplus \gamma^{\beta-u}$ and $\uC\oplus \gamma^\alpha$ are isomorphic by \cite[Theorem 3.1]{ishida:pre}. So are $P(\uC\oplus \gamma^\beta)$ and $P(\uC\oplus \gamma^\alpha)$.
\end{proof}

\begin{theorem}\label{thm:CR_four}
    Let $B_4$ and $B_4'$ be four-stage Bott manifolds. Then, $B_4$ is diffeomorphic to $B_4'$ if and only if $H^\ast(B_4)$ is isomorphic to $H^\ast(B_4')$ as graded rings.
\end{theorem}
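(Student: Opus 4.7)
The plan is to split into cases according to $t := |X(B_4)| = |X(B_4')|$. Since $\alpha_1=0$ forces $x_1^2=0$, and $\alpha_2\in\Z\cdot x_1$, we get $\alpha_2^2=0$ automatically; so for four-stage Bott manifolds only the values $t\in\{2,3,4\}$ can occur.

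If $t=4$ then both manifolds are $\Q$-trivial by Proposition~\ref{proposition:Q-trivial Bott manifold}, and Theorem~\ref{thm:SCR_Q_trivial} yields a diffeomorphism realizing $\varphi$. If $t=3$ then $\varphi$ sends the basis $X(B_4)$ of $\pi_4^\ast H^2(B_3)$ onto the basis $X(B_4')$ of $\pi_4'^\ast H^2(B_3')$, so $\varphi$ is $3$-stable and induces an isomorphism $h_3\colon H^\ast(B_3)\to H^\ast(B_3')$; Theorem~\ref{thm:SCR_three} realizes $h_3$ by a diffeomorphism, and Theorem~\ref{thm;Ishida} then lifts it to a diffeomorphism $B_4\to B_4'$.

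The substantive case is $t=2$. Here $\varphi$ is only $2$-stable and cannot be fed directly into Theorem~\ref{thm;Ishida}. I would write $\alpha_3=A^1_3 x_1+A^2_3 x_2$ and $\alpha_4=A^1_4 x_1+A^2_4 x_2+A^3_4 x_3$ (and similarly for $B_4'$), use $\alpha_3^2\neq 0$ and $\alpha_4^2\neq 0$ to pin down which coefficients may vanish, and then determine the possible forms of $\varphi(x_3)$ and $\varphi(x_4)$ by requiring that they continue to satisfy the square relations inside $H^\ast(B_4')$. In each resulting sub-case, the aim is either to rearrange the Bott tower so that $\varphi$ becomes $3$-stable (so the $t=3$ argument applies), or to produce an explicit class $u\in H^2(B_3')$ satisfying the hypotheses of Lemma~\ref{lemma:bundle_change}, thereby identifying $B_4$ and $B_4'$ as isomorphic projective bundles over three-stage bases already matched up by Theorem~\ref{thm:SCR_three}.

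The main obstacle will be this final case analysis. The arithmetic constraints from $\alpha_3^2\neq 0$ and $\alpha_4^2\neq 0$, combined with the freedom $\varphi$ enjoys at the $x_3,x_4$ level, yield a finite but intricate list of sub-cases; in each one the existence of a class $u$ with $u(u-\beta)=0$ has to be verified by direct computation. The three-stage rigidity, Lemma~\ref{lemma:bundle_change}, and Theorem~\ref{thm;Ishida} together constitute the machinery that should make this finite check feasible.
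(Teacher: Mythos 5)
Your case split $t=|X(B_4)|\in\{2,3,4\}$, the observation that $t\geq 2$ is automatic, and your handling of $t=4$ (via Theorem~\ref{thm:SCR_Q_trivial}) and $t=3$ ($3$-stability, then Theorem~\ref{thm:SCR_three} fed into Theorem~\ref{thm;Ishida}) coincide exactly with the paper's proof. The gap is the case $t=2$, which you defer to ``a finite but intricate list of sub-cases'': that list \emph{is} the theorem, and your proposed dichotomy does not cover it. Following the paper, $2$-stability gives an induced isomorphism $\Z[x_3,x_4]/\langle x_3^2=0,\, x_4^2=A^3_4x_3x_4\rangle \to \Z[y_3,y_4]/\langle y_3^2=0,\, y_4^2=B^3_4y_3y_4\rangle$, and preservation of square-vanishing classes forces $\varphi(x_3)$, up to sign and modulo $y_1,y_2$, to be one of: $y_3$; or $y_4-\tfrac{B^3_4}{2}y_3$ ($B^3_4$ even); or $2y_4-B^3_4y_3$ ($B^3_4$ odd). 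The first two forms do fall under your scheme: the first is $3$-stable outright, and for the second one applies Lemma~\ref{lemma:bundle_change} with $u=\tfrac{B^3_4}{2}y_3$ and then interchanges the third and fourth stages, after which the conjugated isomorphism is $3$-stable and Theorem~\ref{thm;Ishida} applies.

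The third form, where $A^3_4$ and $B^3_4$ are both odd, escapes both of your branches, and this is where the real content lies. No rearrangement of towers makes $\varphi$ $3$-stable there, and the three-stage bases are \emph{not} ``already matched up by Theorem~\ref{thm:SCR_three}'': $\varphi$ induces no isomorphism $H^\ast(B_3)\to H^\ast(B_3')$ at all, so there is nothing to which that theorem can be applied. The paper instead extracts from $\varphi(x_3(x_3-\alpha_3))=0$ the relations $\beta_4=by_3-\e w+\e\varphi(\alpha_3)/2$, $\beta_3=(2\e w-\e\varphi(\alpha_3))/b$ and $w^2=w\varphi(\alpha_3)$, and then---this is the key arithmetic step absent from your sketch---compares $\alpha_3^2=\tfrac{1}{a^2b^2}\alpha_3^2\neq 0$ to force $|A^3_4|=|B^3_4|=1$, whence $\alpha_4=x_3-\tfrac{\alpha_3}{2}$ and $\beta_4=y_3-\tfrac{\beta_3}{2}$. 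Only then is a diffeomorphism built, and it is built by hand, not from $\varphi$: Hirzebruch-surface rigidity matches $B_2'$ with $B_2$, Lemma~\ref{lemma:bundle_change} applied at the \emph{third} stage (not the fourth) identifies $P(\uC\oplus\gamma^{\varphi(\alpha_3)})$ with $B_3'$, and pulling back the fourth stage along these maps exhibits $B_4'$ as carrying the same tower as $B_4$. Note that the resulting diffeomorphism does not induce $\varphi$---the paper explicitly leaves realizability of $\varphi$ in this sub-case as an open problem---so any plan that hopes to settle every sub-case by realizing $\varphi$ or by quoting Theorem~\ref{thm:SCR_three} on a base isomorphism induced by $\varphi$ cannot be completed; the missing normalization $|A^3_4|=|B^3_4|=1$ and the ad hoc construction are the substance your outline omits.
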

\begin{proof}
    Let $\varphi \colon H^\ast(B_4) \to H^\ast(B_4')$ be a graded ring isomorphism.
    If both $B_4$ and $B_4'$ are $\Q$-trivial, then, by Theorem~\ref{thm:SCR_Q_trivial}, $\varphi$ can be realized by diffeomorphism. If $|X(B_4)|=3$, then, combining Theorem~\ref{thm:SCR_three} and Theorem~\ref{thm;Ishida}, $\varphi$ also can be realized. Hence, for the above two cases, $B_4$ and $B_4'$ are diffeomorphic.

    Assume that $|X(B_4)|=2$. We denoted by $y_j, \beta_j$ and $B^i_j$ those elements in $H^\ast(B_4')$ which correspond to $x_j, \alpha_j$ and $A^i_j$ in $H^\ast(B_4)$ for $j=1,\ldots, 4$.  Since $\varphi$ is $2$-stable, $\varphi$ induces a ring isomorphism
    $$ \xymatrix{
        H^\ast(B_4)/ \pi_4^\ast \circ \pi_3^\ast (H^\ast(B_2)) \ar@{=}[d] \ar[r] &  H^\ast(B'_4)/ \pi_4'^\ast \circ \pi_3'^\ast (H^\ast(B'_2)). \ar@{=}[d] \\
        \Z[x_3,x_4]/ \langle x_3^2 =0, x_4^2 = A^3_4 x_3x_4 \rangle & \Z[y_3, y_4] / \langle y_3^2 =0, y_4^2 = B^3_4 y_3 y_4 \rangle}
    $$ Hence, since it preserves the set of primitive square vanishing elements, we conclude $A^3_4$ and $B^3_4$ have the same parity, and $\varphi(x_3)$ is either $\e y_3 + w$, $\e (y_4 - \frac{B^3_4}{2}y_3) +w$ (if $B^3_4$ is even) or $\e (2y_4 - B^3_4 y_3) +w$ (if $B^3_4$ is odd), where $\e = \pm 1$ and $w$ is a linear combination of $y_1$ and $y_2$.

    \textbf{CASE 1 : $\varphi(x_3) = \e y_3 + w$.} Note that $\varphi$ is $3$-stable. Hence, $\varphi$ can be realized by diffeomorphism.

    \textbf{CASE 2 : $\varphi(x_3) = \e (y_4 - \frac{B^3_4}{2}y_3) +w$.} Note that $B^3_4$ (say, $b$) is even. If $b=0$, then we may interchange the third and fourth stages of its Bott tower structure as in Lemma~\ref{lemma:bundle_change}. Hence, $\varphi(x_3)$ would be $3$-stable, and, hence, it can be realized. Suppose that $b\neq 0$. Since $x_3(x_3 - \alpha_3)=0$,
\begin{align*}
    0 &= \varphi(x_3 (x_3 - \alpha_3)) = (\e y_4 - \frac{\e b}{2}y_3 + w) (\e y_4 -\frac{\e b}{2}y_3 + w - \varphi(\alpha_3)) \\
        &=y_4 (y_4 - by_3 + 2 \e w -\e \varphi(\alpha_3)) + \frac{by_3}{4}(by_3 - 4\e w + 2 \e\varphi(\alpha_3)) + w^2 - w\varphi(\alpha_3)
\end{align*}
Because $\varphi(\alpha_3)$ is a linear combination of $y_1$ and $y_2$ and $b \neq 0$, we have that
\begin{align}
y_4 (y_4 - by_3 + 2 \e w -\e \varphi(\alpha_3)) &= 0 \in H^\ast(B'_4), \text { and } \label{eqn:case1-1}\\
\frac{by_3}{2}(\frac{by_3}{2} -2 \e w + \e\varphi(\alpha_3)) &= 0 \in H^\ast(B'_4). \label{eqn:case1-2}
\end{align}
Hence, by \eqref{eqn:case1-1}, $\beta_4 = by_3 - 2\e w + \e \varphi(\alpha_3)$. Let $u = \frac{b y_3}{2}$. Then, by \eqref{eqn:case1-2}, $u(\beta_4 - u)= 0$. Hence, by Lemma~\ref{lemma:bundle_change}, we have an isomorphism $f \colon B'_4 \to P(\uC\oplus \gamma^{\beta-2u})$ as bundles over $B'_3$. This isomorphism gives a new Bott tower structure of $B'_4$ whose 3rd and 4th stages are interchangable. The interchange map is denoted by $g$. The new Bott tower structure obtained by $g \circ f (B'_4)$ is denoted by $B''_{\bullet}$. Note that $f$ and $g$ are diffeomoprhisms, and $B''_\bullet$ is well-ordered. Hence, one can easily check that $g^\ast \circ f^\ast \circ \varphi \colon H^\ast(B_4) \to H^\ast(B''_4)$ is $3$-stable. Therefore, $g^\ast \circ f^\ast \circ \varphi$ is realizable, and, hence, so is $\varphi$.

    \textbf{CASE 3 : $\varphi(x_3) = \e (2y_4 - B^3_4 y_3) +w$.} Note that both $A^3_4$ (say, $a$) and $B^3_4$ (say, $b$) are odd. We may also assume that
    $\varphi^{-1}(y_3) = \ve (2x_4 - a x_3) +z$, where $\ve = \pm1$ and $z$ is a linear combination of $x_1$ and $x_2$.
    Since $x_3(x_3 - \alpha_3)=0$,
\begin{align*}
    0 &= \varphi(x_3 (x_3 - \alpha_3)) = (2\e y_4 - b\e y_3 + w) (2\e y_4 - b\e
    y_3 + w - \varphi(\alpha_3)) \\
        &=4y_4 (y_4 - by_3 +  \e w -\e \frac{\varphi(\alpha_3)}{2}) + b^2y_3(y_3 - \frac{2}{b}\e w +  \frac{1}{b}\e\varphi(\alpha_3)) + w^2 - w\varphi(\alpha_3)
\end{align*}
Hence, $\beta_4 = by_3 - \e w + \e \frac{\varphi(\alpha_3)}{2}$, $\beta_3 = \frac{2\e w }{b} - \frac{\e \varphi(\alpha_3)}{b}$ and $w^2 = w\varphi(\alpha_3)$.
Note that $\beta_3^2 = \frac{1}{b^2} \varphi(\alpha_3^2) \neq 0 \in H^4(B'_4)$. Similarly, we also have $\alpha_3^2 = \frac{1}{a^2} \varphi^{-1}(\beta_3^2)$. Thus, $\alpha_3^2 = \frac{1}{a^2 b^2} \alpha_3^2$. Since $\alpha_3^2$ does not vanish, $a^2 b^2 = 1$. Hence, $|a|=|b|=1$. We may assume that $a=b=1$. Then, $\beta_3 = 2\e w - \e \varphi(\alpha_3)$, and $\beta_4 = y_3  - \e w  + \frac{\e \varphi(\alpha_3)}{2} = y_3 - \frac{\beta_3}{2}$. Similarly, we have $\alpha_4 = x_3 - \frac{\alpha_3}{2}$. By Lemma~\ref{lemma:bundle_change}, we have a bundle isomorphism $f \colon P(\uC\oplus \gamma^{\varphi(\alpha_3)}) \to B'_3$ over $B_2'$. Then, we obtain the pullback $f^\ast B'_4 = P(\uC\oplus \gamma^{y_3 - \frac{\varphi(\alpha_3)}{2}})$ of $B'_4$ by $f$, and we obtain the induced diffeomorphism $\tilde{f} \colon P(\uC\oplus \gamma^{y_3 - \frac{\varphi(\alpha_3)}{2}}) \to B'_4$. On the other hand, since any cohomology ring isomorphism between two Hirzebruch surfaces is realizable, we consider a diffeomorphism $g \colon B'_2 \to B_2$ which induces $\varphi$ restricted by $H^\ast(B_2)$. Then, we also obtain the pullback ${g^{-1}}^\ast (f^\ast B'_4) = P(\uC\oplus \gamma^{x_3 - \frac{\alpha_3}{2}})$ of $f^\ast B'_4$ by $g^{-1}$, and we also have the induced diffeomorphism $\widetilde{g^{-1}} \colon P(\uC\oplus \gamma^{x_3 - \frac{\alpha_3}{2}}) \to f^\ast B'_4$; see the following diagram
$$
\xymatrix{
    P(\uC\oplus \gamma^{x_3 - \frac{\alpha_3}{2}}) \ar[d] \ar[r]^{\widetilde{g^{-1}}} &  P(\uC\oplus \gamma^{y_3 - \frac{\varphi(\alpha_3)}{2}})   \ar[rr]^{\tilde{f}} \ar[d] && P(\uC\oplus \gamma^{y_3 - \frac{\beta_3}{2}}) \ar[d]^{\pi'_4}\\
    P(\uC\oplus \gamma^{\alpha_3}) \ar[d] \ar[r]& P(\uC\oplus \gamma^{\varphi(\alpha_3)}) \ar[rr]^{f} \ar[rd]&& \ar[ld]^{\pi'_3} P(\uC\oplus \gamma^{\beta_3}) \\
    B_2  && \ar[ll]^g B_2'. &
}
$$
Note that $P(\uC\oplus \gamma^{x_3 - \frac{\alpha_3}{2}}) = P(\uC\oplus \gamma^{\alpha_4})$, and, hence, $P(\uC\oplus \gamma^{\alpha_4}) \to P(\uC\oplus \gamma^{\alpha_3}) \to B_2$ is a Bott tower structure of $B_4$. Hence, $\tilde{f} \circ \widetilde{g^{-1}}$ is a diffeomorphism between $B_4$ and $B'_4$.

In the three above cases, we have shown that $B_4$ and $B'_4$ are diffeomorphic, which proves the theorem.
\end{proof}

\begin{example}
Let $B_4$ be a $4$-stage Bott manifold with the Bott tower structure $P(\uC\oplus \gamma^{x_3 - \frac{\alpha_3}{2}}) \to P(\uC\oplus \gamma^{\alpha_3}) \to B_2$. Consider four homomorphisms $\varphi_k \colon H^\ast(B_4) \to H^\ast(B_4)$ $(k=1, \ldots, 4)$ defined by
\begin{enumerate}
  \item $\varphi_1(x_1) = x_1$, $\varphi_1(x_2) =x_2$, $\varphi_1(x_3) = 2x_4 - x_3 + \alpha_3$, and $\varphi_1(x_4) = x_4$;
  \item $\varphi_2(x_1) = x_1$, $\varphi_2(x_2) =x_2$, $\varphi_2(x_3) = 2x_4 - x_3 + \alpha_3$, and $\varphi_2(x_4) = x_4 - x_3 + \frac{\alpha_3}{2}$;
  \item $\varphi_3(x_1) = x_1$, $\varphi_3(x_2) =x_2$, $\varphi_3(x_3) = - 2x_4 + x_3$, and $\varphi_3(x_4) = -x_4$;
  \item $\varphi_4(x_1) = x_1$, $\varphi_4(x_2) =x_2$, $\varphi_4(x_3) = - 2x_4 + x_3$, and $\varphi_4(x_4) = -x_4+x_3-\frac{\alpha_3}{2}$.
\end{enumerate} Then, they are all well-defined, and are graded ring isomorphisms. Moreover, they are all under the third case of the proof of Theorem~\ref{thm:CR_four}.
\end{example}

We remark that a cohomology ring isomorphism $\varphi$ is realizable unless it is under the last case of the proof of Theorem~\ref{thm:CR_four}. However, we do not know whether $\varphi$ of the last case is realizable or not. In order to prove the strong cohomological rigidity for $4$-stage Bott manifolds, what we need is that any automorphism of the cohomology ring of $B_4$ with the Bott tower structure $P(\uC\oplus \gamma^{x_3 - \frac{\alpha_3}{2}}) \to P(\uC\oplus \gamma^{\alpha_3}) \to B_2$ under the last case is realizable. We note that there are only finitely many such automorphisms. Since we may assume that $\varphi(x_1) = x_1$ and $\varphi(x_2)=x_2$, there are only four essential automorphisms $\varphi_k$ ($k=1,\ldots,4$).

\begin{problem}
    Are $\varphi_k$'s ($k=1, \ldots, 4$) realizable?
\end{problem}

\section*{Acknowledgements}
The author would many thank to Professor Matthias Kreck, Housdorff Research Institute for Mathematics, for inviting him to HIM and supporting nice environment to complete this work, and thank to Anna Abczynski, Bonn University, for pointing out many small mistakes and unclear explanations, which helped me improve the paper significantly.

\bigskip
\bibliographystyle{amsplain}
\providecommand{\bysame}{\leavevmode\hbox to3em{\hrulefill}\thinspace}

\end{document}